\newtheorem{thm}{Theorem}[section]
\newtheorem{prop}[thm]{Proposition}
\newtheorem{cor}[thm]{Corollary}
\theoremstyle{definition}
\newtheorem{defn}[thm]{Definition}
\theoremstyle{remark}
\numberwithin{equation}{section}
\newcommand{\sbq}{\subseteq}
\newcommand{\mc}{\mathcal}
\newcommand{\vide}{\emptyset} 
\newcommand{\tbf}{\textbf}
\newcommand{\mbf}{\mathbf}
\newcommand{\inv}{^{-1}}
\newcommand{\bz}{\mbf{0}}
\newcommand{\C}{\text{C}}
\DeclareMathOperator{\ann}{ann}
\DeclareMathOperator{\coz}{coz}
\DeclareMathOperator{\z}{z}
\newcommand{\R}{\mathbb R}
\newcommand{\N}{\mathbb N}
\newcommand{\varep}{\varepsilon}
\newcommand{\res}{\raisebox{-.5ex}{$|$}}
\newcommand{\cx}{\text{C}(X)}
\newcommand{\In}{\text{Int}\,}
\newcommand{\qcl}{\text{Q}_{\text{cl}}}
\newcommand{\der}{\frac{d}{dx}}
\newcommand{\derk}[1]{\frac{d^#1}{dx^#1}}
\newcommand{\Hom}{\text{Hom}\,}
\newcommand{\ck}[1]{\C^{\,#1}(\R)}
\begin{document}

\hfill{Preprint version}

 \title[On $\C^k$ functions on open $U\sbq \R$]{On extending $C^{\,k}$ functions from an open set to $\R$, with applications}

 \author{W.D. Burgess}
\address{Department of Mathematics and Statistics\\ University of Ottawa, Ottawa, Canada, K1N 6N5}

\email{wburgess@uottawa.ca}
\thanks{The authors are grateful to Alan Dow for pointing them in the right direction in the case of $k=1$.  Thanks also to Simone Brugiapaglia for suggesting the Mollifier as a possible spline.}  

\subjclass[2010]{26A24, 54C30, 13B30}
\keywords{$C^k$ functions, splines, rings of quotients, Mollifier function}

\author{R. Raphael}
\address{Department of Mathematics and Statistics\\ Concordia University, Montr\'eal, Canada, H4B 1R6}

\email{r.raphael@concordia.ca}

\begin{abstract} For $k\in \N\cup \{\infty\}$ and $U$ open in $ \R$, let $\C^{\,k}(U)$ be the ring of real valued functions on $U$ with the first $k$ derivatives continuous. It is shown  for $f\in \C^{\,k}(U)$ there is $g\in \ck{\infty}$ with $U\sbq \coz g$ and $h\in \ck{k}$ with $fg\res_U=h\res_U$. The function $f$ and its $k$ derivatives are not assumed to be bounded on $U$. The function $g$ is constructed using splines based on the Mollifier function. Some consequences about the ring $\ck{k}$ are deduced from this, in particular that $\qcl(\ck{k}) = \text{Q}(\ck{k})$.  \end{abstract}
\maketitle

\setcounter{section}{1} \setcounter{thm}{0}
\noindent \begin{large}\tbf{1. Introduction.} \end{large}\\[-1ex]

This note looks at certain subrings of $\C(\R)$, the ring of continuous real valued functions on the set, $\R$, of real numbers. These subrings are $\ck{k}$, the ring of $\C^k$ functions on $\R$ for $k\in \N$, i.e., the ring of functions on $\R$ with continuous derivatives up to and including the $k\,$th, as well as $\ck{\infty}= \bigcap_{k\in \N}\ck{k}$, the ring of functions having derivatives of all orders. 

The ring of continuous functions $\cx$ on a (completely regular) topological space has been very extensively studied. There is the classic text L. Gillman and M. Jerison \cite{GJ} and a multitude of papers since (see, for example, \cite{He} and \cite{AGG} and the references therein).  As a result much is known about the ring structure of the special case of $\C(\R)$, where $\R$ is the real line.  Ring theoretical properties of $\ck{k}$ have received less attention.  The impetus for this note was the paper \cite{BKR} that studied the ``domain-like objects'' in the category $\mc{S}$ of semiprime commutative rings.  The domain-like objects are the objects in the limit closure of the subcategory of domains.  This limit closure is a reflector into the subcategory. It turns out that $\C(\R)$ (in fact all $\cx$) and $\ck{\infty}$ are domain-like (\cite[Example~4.4.4 and Example~4.4.6]{BKR}) but $\ck{1}$ is not (\cite[Example~4.4.5]{BKR}), and, indeed, all $\ck{k}, k\in \N$ are not (\cite{BR}).  A description of the domain-like closures (called the DL-closures in \cite{BKR}) of $\ck{k}$ is not pursued here, rather the nature of the $\ck{k}$ as rings is studied, as is the role played by $\C^{\,k}(V)$, where $V$ is dense open in $\R$. 

The key to these results are Theorems~\ref{fg=h} and \ref{fh-k}; these are analytic statements of interest apart from the ring theoretic consequences.

Continuous functions on cozero sets and their extensions were first explored in R.L. Blair and A.W. Hager \cite[Proposition~1.1]{BH}.   They showed that, in a space $X$, if $U=\coz g$ and $f\in \C(U)$ is bounded,   then there is $h\in \cx$ such that $fg\res_U= h\res_U$.  In the present context, the real line, $\R$, is the space to be studied and more than continuity will come into play.  Consider an open subset $U\sbq \R$ and a $\C^k$ function $f$ on $U$.  If $f$ and its $k$ derivatives are all bounded on $U$ then for any $g\in \ck{k}$ where $U= \coz g$ and the $k$ derivatives of $g$ are well behaved, extending $fg\res_U$ by making it zero in $\R\setminus U$ seems feasible. However, $f$ and its $k$ derivatives need not be bounded on $U$, making the task more difficult.  Section~2 of this article is to show that such a $g$ can always be found, and that it is even possible to construct $g\in \ck{\infty}$.  This means that $fg\res_U = h\res_U$, for some $h\in \ck{k}$ and $g\res_U$ is invertible in $\C^k(U)$.  This surprising fact then has ring theoretic consequences explored in Section~3. 

Once these results have been established it can be seen that each $\C^{\,k}(U)$, $U$ open in $\R$, is a localization of $\ck{k}$ at a multiplicatively closed set of non-zero divisors. With this in hand, using $U$ dense, it will follow that the classical ring of quotients (the ring of fractions) and the complete ring of quotients of $\ck{k}$ coincide, are von Neumann regular rings and flat extensions (Theorem~\ref{Qcl}).  These are properties that $\ck{k}$ shares with $\C(\R)$ (and, indeed, all $\cx$, $X$ a metric space, N.J. Fine, L. Gillman and J. Lambek \cite[3.3]{FGL}). Moreover, all the classical rings of quotients of the $\ck{k}$, with $k\in \N\cup \{\infty\}$, are distinct (Proposition~\ref{kvsl}).

An essential tool in the analytic part is using the Mollifier, that will give a $\C^\infty$ spline, called an \emph{M-spline} in the sequel; see, for example, L. H\"{o}rmander \cite[Lemma~1.2.3]{H} but, here, used in dimension one. The construction is detailed in the next section.  \\

\noindent\tbf{Notation.}  The notation for functions in $\C(\R)$ follows that of the text \cite{GJ}. In particular, for $f\in \C(\R)$, $\coz f=\{x\in \R\mid f(x)\ne 0\}$, the cozero set, an open set, and its complement is $\z(f)$, a zero set. \\

\noindent \tbf{2. $\C^{\,k}(U)$ vs $\ck{k}$, $U$  open: using the Mollifier.} \\[-1ex]  \setcounter{section}{2} \setcounter{thm}{0}

\noindent\tbf{2a: Definition of the M-spline.} The basis for the Mollifier-spline, or \emph{M-spline} is the Mollifier function 
\begin{equation*} \sigma(x) = \begin{cases} \exp\frac{-1}{1-x^2} & \text{if $|x|<1$} \\ \;\;0&\;\text{if $|x| \ge 1$}\end{cases}\;.
\end{equation*}
Let $C= \int_{-1}^1\sigma(x)dx$.  Put $\phi(x) = (1/C) \sigma(x)$. Thus $\int_{-\infty}^\infty \phi(x)dx$ exists and is equal to 1. Now define $\Phi(x) = \int_{-\infty}^x \phi(t)dt$. It follows that $\der \Phi(x) = \phi(x)$ and, from that, all the derivatives of $\Phi(x)$ are zero at $\pm 1$, and, between $-1$ and 1, $\Phi (x)$ is a $\C^\infty$ function increasing from 0 to 1.  

If, now, the idea is to have a $\C^\infty$ spline between the points $(a,b)$ and $(c,d)$ in $\R^2$, with $a<c$, first define $\Phi_{a,c}(x) = \Phi(\frac{2x -(a+c)}{c-a})$ and then $\gamma(x) = b+(d-b)\Phi_{a,c}(x)$ is defined on the interval $[a,c]$. All the right derivatives of $\gamma(x)$ at $a$ and all the left derivatives at $b$ exist and are zero.  If, for example, $b>d$, then $\gamma(x)$ decreases as $x$ goes from $a$ to $c$.  The size of the absolute value of the $k$\,th derivative of $\Phi_{a,c}(x)$ will have to be taken into account when $b$ and $d$ are defined.

The function $\gamma(x)$ is called the \emph{M-spline} between the points $(a,b)$ and $(c,d)$. \\

\noindent\tbf{2b: The main analytic result for $k=\infty$.} 

A first theorem, and its companion for $k$ finite, pave the way for the ring theoretical results in Section~3. The proof for $k=\infty$ will be the model for that for $k$ finite. For purposes of the induction,   the 0\,th derivative of a continuous function is taken to be the function itself. 

The method used is to show that even if $f\in \C^{\,\infty}(U)$ and its derivatives are unbounded in $U$, these functions can be ``tamed'' by an appropriate choice of constants in the definition of $g$.  

\begin{thm} \label{fg=h}  
Let $\vide \ne U$ be an open set in $\R$. For $f\in \C^{\,\infty}(U)$ there is $g\in \ck{\infty}$ such that (i)~$U\sbq \coz g$, and (ii)~$fg\res_U$ extends to $h\in \ck{\infty}$. 
\end{thm}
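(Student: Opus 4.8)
The plan is to build $g$ on each connected component of $U$ separately and to set $g=0$ off $U$; recall that $U$ is a countable disjoint union of open intervals $\{I_n\}$. Everything hinges on a gluing criterion: if $G\colon\R\to\R$ vanishes on $\R\setminus U$, is $\C^\infty$ on $U$, and each derivative $G^{(k)}$ (computed on $U$) extends continuously to all of $\R$ by the value $0$ on $\R\setminus U$, then $G\in\ck{\infty}$ and $G^{(k)}$ is exactly that extension. To see this, take $c\in\R\setminus U$ and $x\in U$ near $c$, and let $p\in\{$endpoints of the component of $x\}$ lie between $c$ and $x$; since $G(p)=0$ and $|x-c|\ge|x-p|$, we get $|G(x)/(x-c)|\le|G(x)/(x-p)|=\bigl|\tfrac1{x-p}\int_p^x G^{(1)}\bigr|\le\sup_{[p,x]}|G^{(1)}|\to0$, because $p\to c$ and $G^{(1)}$ is continuous with $G^{(1)}(c)=0$; induction on $k$ then gives all orders. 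I would thus reduce the theorem to producing $g>0$ on each $I_n$ with: (a)~for every $k$, $g^{(k)}$ and $(fg)^{(k)}$ tend to $0$ at the finite endpoints of $I_n$; and (b)~for every $k$, $\sup_{I_n}|g^{(k)}|\to0$ and $\sup_{I_n}|(fg)^{(k)}|\to0$ as $n\to\infty$. Property (a) governs approach to an endpoint within its own component, while (b) governs boundary points that are accumulation points of infinitely many components, so together they verify the continuous-extension hypothesis at every point of $\R\setminus U$.

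Fix one component $I=(a,b)$ and a finite endpoint, say $a$. I would pick nodes $a<\cdots<x_2<x_1<x_0$ with $x_m\downarrow a$ geometrically, so the gap $\ell_m=x_m-x_{m+1}$ is comparable to $\operatorname{dist}([x_{m+1},x_m],a)$, and on each $[x_{m+1},x_m]$ let $g$ be the M-spline $\ga$ from $(x_{m+1},c_{m+1})$ to $(x_m,c_m)$ for a positive sequence $c_m\downarrow0$ to be chosen. Because every M-spline has all one-sided derivatives equal to $0$ at its endpoints, consecutive pieces glue to a $\C^\infty$ function on $(a,x_0]$, positive since each $\ga$ is monotone between the positive values $c_{m+1},c_m$. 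From $\Phi_{a,c}^{(k)}(x)=\Phi^{(k)}(\cdot)\,(2/(c-a))^k$ one gets a bound of the shape $|g^{(k)}|\le c_m K_k/\ell_m^{\,k}$ on $[x_{m+1},x_m]$, where $K_k=2^k\|\Phi^{(k)}\|_\infty$ depends only on $k$. Setting $B_m=\max\{1\}\cup\{\,|f^{(i)}(x)|:x\in[x_{m+1},x_m],\,0\le i\le m\,\}$ (finite, as $f\in\C^\infty$ on a compact subinterval of $U$) and using Leibniz, the contribution of the $m$-th piece to any $|(fg)^{(j)}|$ is at most a $j$-dependent constant times $B_m c_m/\ell_m^{\,j}$.

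The crux is the choice of the $c_m$: a single sequence must defeat, simultaneously for all orders, both the unbounded growth of $f$ and its derivatives (through $B_m$) and the blow-up of the spline derivatives caused by the shrinking gaps (through $1/\ell_m^{\,k}$). This is the ``taming by constants'', carried out by a diagonal choice such as $c_m\le \ell_m^{\,m}/(m\,B_m)$ (assuming $\ell_m\le1$ eventually): for each fixed $j$ and all $m\ge j$ one then has $B_m c_m/\ell_m^{\,j}\le \ell_m^{\,m-j}/m\to0$, and likewise $c_mK_k/\ell_m^{\,k}\le K_k\ell_m^{\,m-k}/(mB_m)\to0$, so every $g^{(k)}$ and every $(fg)^{(j)}$ tends to $0$ at $a$, giving (a). Doing the same at $b$ and holding $g\equiv c_0$ on the middle of $I$ completes $I$; an unbounded end needs no spline, as $g$ can be kept at a positive constant there.

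Finally, to secure (b) I would, after fixing the shapes on $I_n$, multiply the entire construction there by a factor $\delta_n>0$ small enough that $\sup_{I_n}|g^{(k)}|\le2^{-n}$ and $\sup_{I_n}|(fg)^{(k)}|\le2^{-n}$ for all $k\le n$; this is legitimate because scaling $g$ scales every $g^{(k)}$ and every $(fg)^{(k)}$ by $\delta_n$, and it disturbs neither positivity nor (a). I expect the main difficulty to be precisely this bookkeeping---arranging one countable family of node values $\{c_m^{(n)}\}$ together with scalings $\{\delta_n\}$ that meets the countably many order-by-order and component-by-component estimates at once. The mollifier-based M-spline is what keeps each individual estimate clean, since all its derivatives vanish at the nodes and are controlled by $\|\Phi^{(k)}\|_\infty$, so that joined pieces are automatically $\C^\infty$ and the only real work is the uniform smallness of the constants.
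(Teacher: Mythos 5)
Your construction coincides with the paper's at its core: M-splines between geometrically spaced nodes accumulating at each finite endpoint, with node heights chosen diagonally small against both $\max_{i\le m}|f^{(i)}|$ and the spline-derivative blow-up $\ell_m^{-k}$ --- your $c_m\le \ell_m^{\,m}/(mB_m)$ plays exactly the role of the paper's node values $L^2/(2^{2l+1}S_l)$ with $S_l=\prod_i A_{l,i}\prod_i B_{l,i}$. But your verification is organized genuinely differently, and more modularly. The paper first enlarges $U$ to a dense open $V$, then proves smoothness of $g$ and $h=fg$ by a direct induction on the order of the derivative, estimating the difference quotients $\bigl|\derk{r-1}h(x)/(x-x_0)\bigr|$ by hand in two cases (boundary point an endpoint of a component; boundary point an accumulation point of infinitely many components), and it controls the accumulation case by building the component length into the node heights (the $L^2$ factor) together with the Case~A/Case~B device forcing short components to use high-order constants from the start. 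You instead isolate a single gluing lemma --- if every derivative of $G$ computed on $U$ extends continuously by $0$, then $G\in\ck{\infty}$ --- whose MVT/endpoint trick, bounding $|G^{(k-1)}(x)/(x-c)|$ by $\sup_{[p,x]}|G^{(k)}|$ with $p$ the intervening component endpoint, neatly replaces the paper's explicit estimate $1/(x_0-x)\le 2^{n+1}/L$; and you handle accumulation points by an after-the-fact scaling $\delta_n\le 2^{-n}$ indexed by the enumeration of components, rather than by length-based node heights. The finitely-many-exceptions argument (components of index $<N$ are handled by your (a) or by positive distance from $c$, the tail by (b)) then yields the continuous-extension hypothesis at every point of $\R\setminus U$, which you should spell out since you only assert it. Your route buys a clean separation of the one hard estimate from the topological bookkeeping; the paper's buys a single explicit formula for $g$ with no post-hoc renormalization.

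One step needs a small repair: condition (b) as stated is unachievable on an unbounded component, since $\sup_{I_n}|(fg)^{(k)}|$ can be infinite there when $f$ is unbounded at infinity, and no scalar $\delta_n$ fixes that. This is harmless: there are at most two unbounded components, so exempt them from (b) and place them permanently in the finite exceptional set of the continuity check, where they are handled at their single finite endpoint by (a) or by positive distance --- consistent with the paper, which likewise keeps $g_m$ constant on the unbounded side and only ever estimates near finite boundary points. Also, your diagonal choice should be made monotone, e.g. $c_m=\min\{c_{m-1}/2,\ \ell_m^{\,m}/(mB_m)\}$, so the spline pieces are decreasing and positive; that is cosmetic.
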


\begin{proof} The proof will be for dense open sets because $V= U\cup \In (\R\setminus U)$ is dense in $\R$ and $f$ can be extended to $V$ by making it zero on $\In (\R\setminus U)$. From now on, it is assumed that $V$ is dense open and $f\in \C^{\,\infty}(V)$. It will be seen that in this case, $\coz g = V$.

The open set $V$ is a disjoint union of open intervals indexed over $N$, where $N=\N$ or is finite, $V=\bigcup_{m\in N}(u_m,v_m)$. The first step is to define $g$ in the interval $(u_m,v_m)$. The restriction of $g$ to this interval will be denoted $g_m$. It is first assumed that the interval is finite of length $L_m$. To simplify notation the interval will temporarily be called $(u,v)$ of length $L$. 

The interval will be subdivided with $a_1=u+L/2$ and,  for $n> 1$, $a_n = a_{n-1}+(v-a_n)/2= a_1+ L/2^n$. The idea is to connect the points $(a_n, g(a_n))$ to $(a_{n+1}, g(a_{n+1}))$   with an M-spline and with values of $g$   chosen so that the splines with their derivatives, the product of the splines with $f$ and its derivatives all go to zero as $n\to\infty$. Once this is done, a similar development can be done working from $a_1$ to the left toward $u$. The choice of the constants will take some effort.

Before continuing the definition of $g$, there needs to be a word about unbounded intervals.  It might be that there are intervals such as $(-\infty, v)$ or $(u, \infty)$ in the expression for $V$. The two cases are similar.  Assume that the $m$\,th interval is $(-\infty, v)$. Here put $u= v-1$ and $a_1= v-1/2$, and do the construction below (in Case~A) to define $g_m$ on $[a_1,v]$.  To the left of $a_1$, let $g_m$ be the constant $g_m(a_1)$. Since all the left derivatives of $g_m$ at $a_1$ are zero, $g_m$ is $\C^\infty$ on $(-\infty, a_2)$.   It will be seen that it is $\C^\infty$ on all of $(-\infty, v)$.

To define the constants, the function $f$ and its derivatives are dealt with first.  Fix $n\in\N$ and $i\ge 0$, put $$A_{n,i} =1+ \max_{x\in [a_1,a_{n+1}]} |\derk{i}f(x)|\;.$$ By continuity of the derivatives of $f$ in $V$, these constants are well-defined and, for fixed $i$, the $A_{n,i}$ are at least 1 and are non-decreasing in $n$.  

The next step is to deal with the M-splines.  This time, for $n\in \N$, and $i\ge 0$, let $$B_{n,i} = 1+\max_{j=1,\ldots,n} \left( \max_{x\in [a_j, a_{j+1}]} \{ |\derk{i} \Phi_{a_j, a_{j+1}}(x)|\}\right)\;.$$  Again, these numbers, always $\ge 1$,  are well-defined and the sequence $\{B_{n,i}\}$, for fixed $i$, is non-decreasing in $n$. In the expression $\max_{x\in [a_j, a_{j+1}]}\\| \derk{i}\Phi_{a_j, a_{j+1}}(x)|$,  the $i$\,th derivative at the endpoints should be taken to be one-sided derivatives. Both are zero.

To be able to define the function $g_m$, it will be necessary to combine these constants.  However, care must be taken to make sure that at any step in the process only a finite number of these constants are multiplied together.  There are two cases to consider; it will be seen later why there is this distinction.\\[-.5ex]

\noindent\tbf{Case A.} $L\ge 1$.  In this situation, the M-spline in the interval $[a_1, a_2]$ will begin by   only using the first derivative in the constant and the function $f$.  For each $n\in\N$, define $S_n= \prod_{i=0}^nA_{n,i} \prod_{i=0}^nB_{n,i}$.  (Notice that in the case of an infinite interval, $L$ is taken to be 1.)\\[-.5ex]

\noindent\tbf{Case B.} $L<1$.  Let $p\in \N$ be the least natural number with $L\le 1/p$. In this case, the M-spline in the interval $[a_1,a_2]$ will use all the derivatives up to the $p$\,th in the constant. For $n\le p$, define $S_n= \prod_{i=0}^p A_{n,i} \prod_{i=0}^pB_{n,i}$.  For $n> p$,   $S_n= \prod_{i=0}^nA_{n,i} \prod_{i=0}^nB_{n,i}$. \\[-.5ex]

With this groundwork, the function $g_m$ can be defined on $[a_1, v)$ using M-splines. \\[-.5ex] 

 \noindent \tbf{Definition of $\mbf{g_m}$.} Using the above data, for $x\in [a_n, a_{n+1})$ define $g_m(x) =  g_m(a_n) +(g_m(a_{n+1}) - g_m(a_n)) \Phi_{a_n, a_{n+1}}(x)$, where, for any $l\ge 1$, $g_m(a_l) = L^2/(2^{2l+1} S_l)$.  \\[-.5ex]

The function $g_m$ must also be defined on $[u, a_1]$. Here the process is similar with $(u, a_1]$ subdivided as a mirror image, say with $a_1=b_1> b_2 >\cdots$, and the constants defined in the same manner. The constants $A_{n,i}$ and $B_{n,i}$ are defined as above,  although their values may differ from those on the right half.  \\[-.5ex]

\noindent\tbf{Definition of $g$ and $h$.} Now revert to the notation for the $m$\,th interval as $(u_m, v_m)$ and $g$ can now be defined:
\begin{equation*} g(x) = \begin{cases} g_m(x) &\text{if $x\in (u_m,v_m)$, $m \in N$} \\\;\;0 & \text{if $x\notin V$}
\end{cases}\;.
\end{equation*} 

By construction $\coz g= V$. It must now be shown that $g\in \ck{\infty}$ and that $fg\res_V$, defined on $V$, extends to $h\in \ck{\infty}$. 

Both parts will be done together.  First define $h$:
\begin{equation*} h(x) = \begin{cases} f(x)g(x) &\text{if $x\in V$}\\
\;0& \text{if $x\notin V$} 
\end{cases}\;.
\end{equation*}

By construction, both $g$ and $h$ are $\C^\infty$ on $V$. The complement of $V$ has two sorts of points and they will have to be dealt with differently. 

The proof that $g$ and $h$ are in $\ck{\infty}$ will proceed by showing that they are in   $\ck{r}$, for each $r\ge 0$. This is done by induction on $r$.  

The base case, for $r =0$, only requires that $g$ and $h$ be continuous.  The continuity at points in $\R\setminus V$ must be shown.  There are two cases to be considered for $x_0\notin V$ (these cases will come up again in the induction).  \\ 

\noindent\tbf{Case~1:} $x_0$ is a left or is a right endpoint of an interval making up $V$.\\ \tbf{Case~2:} $x_0$ is not a left or is not a right endpoint of an interval making up $V$. \\[-.5ex]

In Case~1, it suffices to show that these functions go to zero at the endpoints of the intervals in $V$. Consider the interval $(u_m, v_m)$ and the right endpoint. From $a_1$ to the right, $g_m$ is a decreasing function and $g_m(a_n) = L_m^2/(2^{2n+1}S_n)$, which goes to zero as $n$ increases (since $S_n\ge 1$ and $L_m=v_m-u_m$ is fixed).   Left endpoints are dealt with in the same way.  The function $fg_m$ on the interval also goes to zero as $x$ approaches $v_m$ because for $x\in [a_n, a_{n+1})$, $|f(x)|\le A_{n,0}\le S_n$. Then, $|f(x)g(x)| \le |f(x)| \frac{L_m^2}{2^{2n+1}S_n} \le \frac{L_m^2}{2^{2n+1}}$. This goes to zero as $x\to v_m$ because $n$ will increase and $L_m$ is fixed. 

In Case~2, if $x_0\notin V$, is, say, not a right endpoint of an interval, then, by density of $V$, in any interval $(x_0-\varep, x_0)$, there are infinitely many intervals from $V$. It is here that the distinction between Cases~A and B comes up. Suppose, for $p\in \N$, that $(u_m,v_m)$ is an interval with $L_m\le 1/p$. Then $g_m(x)\le g_m(a_1) = \frac{L_m^2}{2^3}$, which goes to zero as $L_m\to 0$. Again, if $x\in [a_n, a_{n+1})$, $|f(x)g(x)| \le \frac{L_m^2}{2^{2n+1}}$. As $x\to x_0$ from the left, $x$ will either be in $\R\setminus V$ and $f(x)g(x) = 0$ or is in smaller and smaller intervals from $V$. Since, in  the calculation, $n\ge 1$, $f(x)g(x)$ will converge to zero. This shows the continuity of $g$ and of $h$, and will be the starting point of an induction. 

Throughout the proof will be for $h$ but that for $g$ will be along the same lines and is simpler, and will not be worked out in detail. \\[-.5ex]

\noindent \tbf{The induction assumption.} Assume that $r>0$ and for $0\le l <r$, (i)~$\derk{l}h$ exists and is continuous on $\R$, and (ii)~$\derk{l}h$ is zero on $\R\setminus V$, (iii)~$\derk{l}g$ exists and is continuous on $\R$, and (iv)~$\derk{l}g$ is zero on $\R\setminus V$. 

As above, it is necessary to look at points $x_0\in \R\setminus V$.  Just as before, these are of two types that need separate proofs.  \\ 

\noindent\tbf{Case 1.} Assume that the interval is $(u_m, v_m)$ and that $x_0 = v_m$. The case of a left endpoint will have a similar proof. Since $|\derk{{r-1}}h(x_0)| =0$, by the induction assumption, the expression   $$(a)\;\;\;\left| \derk{{r-1}}h(x)/(x-x_0)\right|$$ must be shown to have limit 0 as $x\to x_0$ from the left. Using the notation for the interval, it can be assumed that $x\in [a_n, a_{n+1})$ for some $n\ge 1$ and that $n\ge r$.   Note that $x_0-x\ge x_0-a_{n+1} = L/2^{n+1}$.  The expression (a) can be rewritten:$$(b)\;\;\;\left|\frac{1}{x-x_0} \sum_{i=0}^{r-1} \derk{i}f(x) \derk{{r-i-1}}g(x)\right| =$$

\begin{equation*} \begin{aligned}\frac{1}{x_0-x}\left|\sum_{i=0}^{r-1}\derk{i}f(x)\left( g_m(a_{n+1}) - g_m(a_n)\right)\derk{{r-1-i}} \Phi_{a_n,a_{n+1}}(x)\right. &\\
+\left. g_m(a_n)\derk{{r-1}}f(x)\right|\;\;\;&\\
\le \frac{2^{n+1}}{L} \left| \sum_{i=0}^{r-1}\derk{i} f(x)\left( \frac{L^2}{2^{2n+3}S_{n+1}}-\frac{L^2}{2^{2n+1}S_n}\right) \derk{{r-1-i}}\Phi_{a_n, a_{n+1}}(x)\right| &\\ +\frac{2^{n+1}}{L}\left|\frac{L^2}{2^{2n+1}S_n}\derk{{r-1}}f(x) \right|\;\;\;\end{aligned} \end{equation*}
\begin{equation*} \begin{aligned}
=\left|\sum_{i=0}^{r-1}\derk{i}f(x)\left(\frac{L}{2^{n+2}S_{n+1}}-\frac{L}{2^n S_n}\right) \derk{{r-1-i}} \Phi_{a_n, a_{n+1}}\right| &\\ + \frac{L}{2^nS_n}\left| \derk{{r-1}}f(x)\right|\;\;\;\end{aligned} \end{equation*}\vspace{-1ex}
\begin{equation*} \begin{aligned}
\le \left|\frac{L(S_n-4S_{n+1})}{2^{n+2}S_nS_{n+1}}\right| \;\; \sum_{i=0}^{r-1} \left|\derk{i}f(x)\derk{{r-1-i}}\Phi_{a_n,a_{n+1}} (x)\right| &\\
+\frac{L}{2^nS_n}\left| \derk{{r-1}}f(x)\right|\;.\;\;
\end{aligned}
\end{equation*}
However, $S_n\le S_{n+1}$ and each term of the sum is $\le S_n$; indeed,  $|\derk{i}f(x) \derk{{r-1-i}} \Phi_{a_n, a_{n+1}}(x)| \le A_{n, i}B_{n, r-1-i}\le S_n$.  Also, $|\derk{{r-1}}f(x)|\le S_n$.  Hence, the last expression above is $\le \frac{5L}{2^{n+2}S_n} rS_n + \frac{L}{2^n}= \frac{5rL}{2^{n+2}} + \frac{L}{2^n}$. Since $L$ and $r$ are fixed, as $x\to x_0$, $n$ increases and the original expression (a) goes to zero. \\[-1ex]

\noindent\tbf{Case 2.} Once again, it is here that the distinction between Cases~A and B comes up.   Here $x_0\notin V$ is not the right endpoint of an interval in $V$. This means that for $\varep>0$, by the density of $V$, the interval $(x_0-\varep, x_0)$ contains infinitely many intervals from $V$, all of length $< \varep$. For the proof it may be assumed that $\varep < 1/r$. In this case as well, it must be shown that the expression (a) tends to zero as $x\to x_0$ from the left. If $x\in (x_0-\varep, x_0)\setminus V$, then (a) is zero. From now on, assume $x\in (x_0-\varep, x_0)\cap V$. 

If $x_0-\varep$ is the right endpoint of an interval in $V$, by reducing $\varep$, it may be assumed that $x\in (u_m, v_m)$ for some interval in $V$ of length $<1/r$. Here, Case~B will apply. In this case, no matter where $x$ is in the interval, the constant $S_n$ used in the definition of $g$ will always involve the derivatives up to at least the $r-1$\,st. With this observation in hand, the proof in this case uses some of the calculations of the one for $x_0$ a right endpoint. 

The work will be done in $(u_m, v_m)$.  A factor in (a) is $1/(x_0-x)$. As before, there are the points $a_1< a_2 < \cdots <v_m$ and, on the left side, $b_1=a_1> b_2 >\cdots > u_m$.  If $x$ falls in $(u_m, v_m)$, there are two cases to consider. If $x\in [a_n, a_{n+1})$ then $x_0-x > v_m-x> v_m-a_{n+1}= L/2^{n+1}$. If $x\in [b_{n+1}, b_n)$, $x_0-x> v_m -x> x-u_m > b_{n+1} -u_m = L/2^{n+1}$. By the calculation in Case~1 in the interval $(u_m,v_m)$, (a)\,$\le   \frac{5rL}{2^{n+2}}+\frac{L}{2^n}$ in the first instance.  In the second instance with $x\in [b_{n+1},b_n)$, the calculation in Case~1 can still be used once it is noticed that $x_0-x > (1/2^{n+1})L$, as indicated above.  Then the calculation of (a) going down to $u_m$ gives (a)\,$\le \frac{5rL}{2^{n+2}}+\frac{L}{2^n}$, again. 
In both cases, $n\ge 1$ and $r$ is fixed. However, if $\varep >0$ and there is an interval $I$ from $V$ inside $(x_0-\varep, x_0)$, then the length of $I$ is $< \varep$; this implies that as $x\to x_0$, $L\to 0$, showing that $(a) \to 0$. Recall that if $x\notin V$, the expression (a) = 0, by the induction assumption.

It has thus been shown that $\derk{r}h(x)$ exists for all $x$ and has value 0 on $\R\setminus V$.  

It remains to show the continuity of $\derk{r}h$. This, in both cases, is very much like the calculations for the existence of the derivative.  Everything proceeds as before without the factor $1/(x_0-x)$ and with the factor $r+1$ instead of $r$.  

As mentioned, similar calculations, without the factor $r$ and not involving $f$, show the same results for $g$.

The induction is now complete and the functions $g$ and $h$ are in $\ck{\infty}$. 
\end{proof}

\noindent\tbf{2c. The cases $\ck{k}$, for $k\in\N$.}  If the function $f$ in the above is only in $\ck{k}$, for some $k\in\N$, then it must be shown that there is an analogue to Theorem~\ref{fg=h}. 

\begin{thm} \label{fh-k}   
Let $\vide \ne U$ be an open set in $\R$ and $k\in\N$. For $f\in \C^{\,k}(U)$ there is $g\in \ck{\infty}$ such that (i)~$U\sbq \coz g$,and (ii)~$fg\res_U$ extends to $h\in \ck{k}$. 
\end{thm}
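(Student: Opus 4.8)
The plan is to adapt the proof of Theorem~\ref{fg=h} essentially verbatim, using the fact that the construction of $g$ there depended on $f$ only through finitely many of its derivatives at each stage. As before, I would reduce to the case where $V=U\cup \In(\R\setminus U)$ is dense open and $f\in \ck{k}(V)$, extended by zero on $\In(\R\setminus U)$; here $f$ and its first $k$ derivatives are continuous on $V$, but $\derk{i}f$ for $i>k$ need not exist. The key observation is that in the definition of $g$, the constants $A_{n,i}$ that involve $f$ are only ever used for $i\le r-1$ when verifying that $h\in \ck{r}$, and the target smoothness we need for $h$ is only $\ck{k}$. So I would define $A_{n,i}$ exactly as before but only for $0\le i\le k$, leaving the $B_{n,i}$ (which depend only on the M-spline $\Phi$, a $\C^\infty$ object) defined for all $i\ge 0$. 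The product $S_n$ is then formed the same way, except that the $f$-factors $\prod A_{n,i}$ run only up to $\min(n,k)$ (respectively $\min(p,k)$ in Case~B), while the $B$-factors run up to $n$ (resp.\ $p$) as before.

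With $g$ so defined, I would first check that $g\in \ck{\infty}$: this part of the argument in Theorem~\ref{fg=h} never used derivatives of $f$ beyond continuity bounds and goes through unchanged, since $g$ is built purely from M-splines and constants. The substantive work is showing $h=fg\res_V$ (extended by zero off $V$) lies in $\ck{k}$. Here I would run the same induction on $r$, but only for $0\le r\le k$. The base case $r=0$ (continuity of $h$) is identical to before, using $|f(x)g(x)|\le L_m^2/2^{2n+1}\to 0$. For the inductive step at level $r\le k$, the Leibniz expansion in display~(b) involves $\derk{i}f$ only for $i\le r-1\le k-1$, so every derivative of $f$ appearing is one that actually exists and is bounded by the relevant $A_{n,i}$; the estimate $|\derk{i}f(x)\derk{{r-1-i}}\Phi_{a_n,a_{n+1}}(x)|\le A_{n,i}B_{n,r-1-i}\le S_n$ still holds, because $S_n$ was arranged to contain precisely those factors. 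The Case~1/Case~2 and Case~A/Case~B bookkeeping carries over with no change, yielding again a bound of the form $\tfrac{5rL}{2^{n+2}}+\tfrac{L}{2^n}\to 0$.

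The main obstacle, and the only place requiring genuine care rather than transcription, is the Case~B mechanism that guarantees enough derivatives are ``pre-loaded'' into $S_n$ near a bad point $x_0$ of Case~2 type. In Theorem~\ref{fg=h} one needed, for each target level $r$, that short intervals near $x_0$ use $S_n$ involving derivatives up to at least $r-1$; this was arranged by choosing $p$ with $L\le 1/p$ and loading derivatives up to the $p$-th. Since here the induction stops at $r=k$, I must confirm that for intervals of length $<1/k$ the constant $S_n$ still incorporates derivatives up to order $k-1$ of $f$ — which it does, because those are exactly the orders at which $A_{n,i}$ is now defined, and $p\ge k$ for such intervals. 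Thus the Case~B definition needs only the harmless modification that the $A$-product is capped at index $k$, and no $\derk{i}f$ with $i>k$ is ever requested. I would close by noting that the continuity of $\derk{r}h$ for $r\le k$ follows from the same estimates without the $1/(x_0-x)$ factor, completing the induction and giving $h\in \ck{k}$ and $g\in \ck{\infty}$ with $U\sbq\coz g$.
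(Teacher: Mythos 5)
Your proposal is correct and follows essentially the same route as the paper: adapt the proof of Theorem~\ref{fg=h}, modify only the $f$-dependent constants so that no derivative beyond the $k$\,th is requested, run the induction for $h$ only up to $r=k$ while letting the induction for $g$ continue to all orders, and observe that the Case~A/Case~B mechanism still loads derivatives up to order $r-1\le k-1$ into $S_n$ on short intervals. The one cosmetic difference is that the paper pads rather than truncates --- it sets $\mathfrak{A}_{n,i}=1+\max_{x\in[a_1,a_{n+1}]}|\derk{k}f(x)|$ for $i>k$ so the formulas for $S_n$ are formally unchanged, whereas you cap the $A$-product at index $k$ --- but since the extra factors are $\ge 1$ and a larger $S_n$ only shrinks $g$, the two variants yield the same estimates.
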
 

\begin{proof}  As in Theorem~\ref{fg=h}, $U$ may be expanded to a dense open set $V$.  The dense open set $V$ is expressed as a union of disjoint intervals, $\bigcup_{m\in N} (u_m,v_m)$.  One such finite interval is divided $a_1<a_2< \cdots <v_m$ and $a_1=b_1> b_2 > \cdots >u_m$   as above, with the same device used for any infinite intervals.  The proof follows much the same pattern as that for Theorem~\ref{fg=h} but the constants $A_{n,i}$ require modification since only the derivatives of $f$ up to the $k$\,th are available; they are given a different symbol.  In order to define $g$ and $h$, the constants $B_{n,i}$ are unchanged.   The same schema is used: for $i = 1, \ldots, k$, $\mathfrak{A}_{n,i} = 1+ \max_{x\in [a_1,a_{n+1}]} |\derk{i}f(x)|$; for $i>k$, $\mathfrak{A}_{n,i} = 1+ \max_{x\in [a_1, a_{n+1}]} |\derk{k}f(x)|$.  

The definition of the constants $S_n$, as before, divides into two cases according to the length $L$ of the interval from $V$ in question: \\
\tbf{Case A.} $L\ge 1$. (Recall that for an infinite interval, $L=1$.)   In this situation the M-spline in the interval $[a_1,a_2]$ will begin using the first derivative and the function $f$. The constant $S_n$ is given by: $$S_n= \prod_{i=0}^n\mathfrak{A}_{n,i}\prod_{i=0}^n B_{n,i}\;.$$    \\[-.5ex]

\noindent\tbf{Case B.} $L<1$.   Let $p\in \N$ be the least natural number with $L\le 1/p$. In this case the M-spline in the interval $[a_1,a_2]$ will use $A_{1,i}$ for $i=0, \ldots, p$. For $n\le p$, define $S_n= \prod_{i=0}^p \mathfrak{A}_{n,i} \prod_{i=0}^pB_{n,i}$.  For $n> p$,   $S_n= \prod_{i=0}^n\mathfrak{A}_{n,i} \prod_{i=0}^nB_{n,i}$.\\[-.5ex]

From this point on, the proof proceeds as in Theorem~\ref{fg=h} except that for $h$, the induction stops at the $k$\,th derivative, although that for $g$ can continue, making $g\in \ck{\infty}$. The induction is done in parallel for $g$ and $h$.  This needs to be made more precise as follows;  note that $k$ is at least 1. The continuity of $h$ and $g$ is as in Theorem~\ref{fg=h} giving the starting point of the induction. \\[-1ex]

\noindent \tbf{The induction assumption.}  (a)~For the induction on $h$, assume that $0< r\le k$ and for $0\le l <r$, (i)~$\derk{l}h$ exists and is continuous on $\R$, and (ii)~$\derk{l}h$ is zero on $\R\setminus V$. (b)~For the induction on $g$, assume for $0< r$ and for $0
\le l< r$ that (iii)~$\derk{l}g$ exists and is continuous on $\R$, and (iv)~$\derk{l}g$ is zero on $\R\setminus V$. 
\end{proof}

The following will be used in the next section; it is a $\C^k$ version of the fact that every open set in a metric space is a cozero set.  

\begin{cor} \label{open}  Let $k\in \N\cup \{\infty\}$. Then, for any open set $U$ in $\R$ there is $a\in \ck{k}$ such that $\coz a=U$. In fact, $a$ can be chosen to be in $\C^{\,\infty}(\R)$. \end{cor}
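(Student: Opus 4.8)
The plan is to reuse the construction of $g$ from the proof of Theorem~\ref{fg=h}, specialized to the constant function $f\equiv 1$ and carried out on $U$ itself rather than on a densification. Write $U=\bigcup_{m\in N}(u_m,v_m)$ as its disjoint union of component intervals. On each such interval I would place a single $\C^\infty$ bump that is strictly positive on the interval and vanishes, together with all its derivatives, at both endpoints; the Mollifier supplies one directly. On a bounded interval of length $L_m$ one may take $\sigma_m(x)=\sigma\bigl(\tfrac{2x-(u_m+v_m)}{L_m}\bigr)$, which is positive on $(u_m,v_m)$, zero outside, and flat to infinite order at $u_m$ and $v_m$. The at most two unbounded component intervals (and possibly $\R$ itself, where one takes a positive constant) are treated just as the infinite intervals are in Theorem~\ref{fg=h}; being finite in number, they raise no convergence issue. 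The candidate is $a=\sum_m c_m\sigma_m$ for suitable positive $c_m$. Since distinct components meet at most in a common endpoint lying outside $U$, where all bumps vanish, each summand is positive exactly on its own interval, so $\coz a=\bigcup_m(u_m,v_m)=U$ by construction.

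The one point needing care is that $a\in\ck{\infty}$, i.e. smoothness at the points of $\R\setminus U$. I would fix the heights by a diagonal choice, $c_m\le 2^{-m}\bigl(1+\max_{0\le i\le m}\max_{x}|\derk{i}\sigma_m(x)|\bigr)^{-1}$. Then for each fixed order $i$ and every $m\ge i$ the quantity $c_m|\derk{i}\sigma_m(x)|$ is bounded by $2^{-m}$ uniformly in $x$, so $\sum_m c_m\,\derk{i}\sigma_m$ converges uniformly on $\R$ for every $i$. The standard theorem on term-by-term differentiation of a uniformly convergent series of derivatives then yields $a\in\ck{\infty}$, whence $a\in\ck{k}$ for every $k$ as well. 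This is exactly the role played by the weights $g_m(a_l)=L_m^2/(2^{2l+1}S_l)$ in Theorem~\ref{fg=h}; because $f$ is now constant there is nothing to ``tame,'' the reduction to the dense set $V$ becomes unnecessary, and the factors $A_{n,i}$ may simply be dropped.

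The main (and essentially only) obstacle is the behaviour at a boundary point $x_0\in\fr(U)$ that is a limit of infinitely many component intervals without being an endpoint of any of them — the Case~2 points in the proof of Theorem~\ref{fg=h}. Near such $x_0$ the function $a$ is the sum of infinitely many bumps, and one must check that $\derk{r}a(x_0)=0$ and that $\derk{r}a$ is continuous there for every $r$. This is precisely the computation performed for $g$ in Theorem~\ref{fg=h} (Cases~1 and~2 of the induction), which never invokes $f$ and hence transfers verbatim; the uniform convergence secured above is exactly what forces all the relevant one-sided limits to equal $0$. Points of $\In(\R\setminus U)$ are trivial, since $a$ vanishes on a whole neighbourhood of each, and endpoints of components fall under the easier Case~1. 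Combining these, $a\in\ck{\infty}$ with $\coz a=U$, which proves both assertions.
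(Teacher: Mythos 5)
Your proposal is correct, but it takes a genuinely different and more elementary route than the paper. The paper dispatches the corollary in three lines: it sets $V=U\cup \In(\R\setminus U)$, feeds the indicator function of $U$ (which lies in $\C^{\,\infty}(V)$, being locally constant on $V$) into Theorems~\ref{fg=h} and \ref{fh-k}, and takes $a=h$; all of the analysis --- the densification, the subdivision of each component into spline segments, the constants $A_{n,i}$, $B_{n,i}$, $S_n$, and the two-case induction at boundary points --- is inherited from those theorems. You instead construct $a$ directly: one rescaled Mollifier bump per component of $U$, with heights $c_m$ chosen by a diagonal trick so that for each fixed order $i$ the series of $i$-th derivatives is eventually dominated by $2^{-m}$; the Weierstrass M-test together with the standard theorem on term-by-term differentiation of series then gives $a\in\C^{\,\infty}(\R)$, and the disjointness of the supports (meeting at most in common endpoints outside $U$, where all bumps are flat to infinite order) gives $\coz a=U$. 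Your argument is self-contained, needs no densification and no subdivision within components --- with $f\equiv 1$ there is indeed nothing to tame. One remark: your closing paragraph, which re-invokes the paper's Case~1/Case~2 induction for $g$, is unnecessary, and its claim that those computations transfer ``verbatim'' is not quite literal, since your $a$ has no internal spline structure; but no matter, because the uniform convergence of $\sum_m c_m\derk{i}\sigma_m$ for every $i$ already yields smoothness at \emph{every} point of $\R\setminus U$, accumulation points of infinitely many components included, with no case analysis at all. What the paper's route buys is economy given machinery already paid for; what yours buys is independence --- it is essentially the classical proof that every open subset of $\R$ is the cozero set of a $\C^{\,\infty}$ function, and it would stand even for a reader who skipped the proofs of the main theorems.
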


\begin{proof} Let $V= U\cup \In (\R\setminus U)$. Define $f\in \C^{\,k}(V)$ by $f(x) = 1$ if $x\in U$ and $f(x) =0$ otherwise. According to Theorems~\ref{fg=h} and \ref{fh-k}, there are $g, h\in \ck{k}$, $g\in \ck{\infty}$ with $fg\res_V=h\res_V$. 
Then, $h$ is zero on $(\R\setminus V) \cup \In (\R\setminus U)= \R\setminus U$. Then, choose $a=h$. 

Notice that $f\in \C^{\,\infty}(V)$, which allows $h$ to be chosen in $\ck{\infty}\sbq \ck{k}$. 
\end{proof}

The corollary also says that the set of non-zero divisors of the ring $\ck{k}$ is $\{g\in \ck{k}\mid \coz g \;\text{is dense in $\R$}\}$. Indeed, if $b\in \ck{k}$ and $\coz b$ is not dense  there is an open set $U\ne \vide$ disjoint from $\coz b$. Then if $a\in \ck{k}$ with $\coz a=U$, it follows that $ab = \bz$.  Moreover, if $U$ is chosen to be $\In \z(b)$, then $U\cup \coz b$ is dense and then $ab=\bz$ and $a+b$ is a non-zero divisor.  This property says that the ring $\ck{k}$ is \emph{complemented} (see, for example, Knox, Levy, McGovern and Shapiro, \cite[Introduction]{KLMS}). Notice that a non-zero divisor in any $\ck{k}$ is also a non-zero divisor in the larger ring $\C(\R)$. \\

\setcounter{section}{3} \setcounter{thm}{0}
\noindent \tbf{3. Applications to rings of quotients.} \\[-1.5ex]

In the situation of Theorem~\ref{fg=h}, and similarly with that of Theorem~\ref{fh-k}, there is, by restriction, a ring homomorphism $\ck{k} \to \C^{\,k}(U)$. The nature of the relationship between these two rings will be examined.

It may be necessary to recall a few terms.  The first is the notion of an \emph{epimorphism} in a category which is recalled. The category here will be that of all rings (with 1), $\mc{R}$.

\begin{defn} \label{epi}   In a category $\mc{C}$ a morphism $e\colon A\to B$ is called an 
\emph{epimorphism} if for two morphisms $f,g\colon B\to C$, $f\circ e=g\circ e$ implies $f=g$.  \end{defn}

\begin{prop} \label{zigzag}   
Let $\vide \ne U$ be an open set in $\R$. Then the homomorphism $\psi \colon \ck{k} \to \C^{\,k}(U)$, given by restriction, is a flat epimorphism in the category of rings. In particular, if $U$ is dense $\psi$ is a monomorphism.  \end{prop}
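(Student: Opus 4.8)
The plan is to realize $\psi$ as a localization map, since every localization of a commutative ring is automatically a flat epimorphism. Set $S=\{s\in\ck{k}\mid U\sbq\coz s\}$. This set contains $1$ and is multiplicatively closed because $\coz(s_1s_2)=\coz s_1\cap\coz s_2$. If $s\in S$, then $s\res_U$ is nowhere zero on $U$, and the reciprocal of a nowhere-vanishing $\C^{\,k}$ function is again $\C^{\,k}$; hence $\psi(s)=s\res_U$ is a unit of $\C^{\,k}(U)$. By the universal property of localization, $\psi$ factors as $\psi=\bar\psi\circ\ell$, where $\ell\colon\ck{k}\to S^{-1}\ck{k}$ is the canonical map and $\bar\psi(a/s)=\psi(a)\psi(s)\inv$. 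It then suffices to show that $\bar\psi$ is an isomorphism, for then $\psi$ inherits flatness and the epimorphism property from $\ell$.

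For surjectivity of $\bar\psi$, take any $f\in\C^{\,k}(U)$. By Theorem~\ref{fg=h} when $k=\infty$, or Theorem~\ref{fh-k} when $k\in\N$, there are $g\in\ck{\infty}\sbq\ck{k}$ with $U\sbq\coz g$, so that $g\in S$, and $h\in\ck{k}$ with $fg\res_U=h\res_U$. Dividing by $g\res_U$ on $U$ gives $f=\psi(h)\psi(g)\inv=\bar\psi(h/g)$, so $\bar\psi$ is onto.

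The delicate step, and the only real obstacle, is injectivity. Suppose $\bar\psi(h/s)=\bz$; since $\psi(s)$ is a unit this forces $\psi(h)=h\res_U=\bz$, i.e.\ $h$ vanishes on $U$. I must produce $t\in S$ with $th=\bz$ in $\ck{k}$, for then $h/s=th/(ts)=0$ in $S^{-1}\ck{k}$. Since $U$ is open and $h\res_U=\bz$, we have $U\sbq\In\z(h)$. By Corollary~\ref{open} choose $t\in\ck{\infty}\sbq\ck{k}$ with $\coz t=\In\z(h)$. Then $U\sbq\coz t$, so $t\in S$, while $\coz t\sbq\z(h)$ gives $\coz t\cap\coz h=\vide$, whence $th=\bz$. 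Thus $h/s=0$ and $\bar\psi$ is injective; the interior-of-the-zero-set construction together with Corollary~\ref{open} is exactly what clears this obstacle.

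Consequently $\bar\psi$ is an isomorphism, and $\psi$ is, up to this isomorphism, the localization map $\ell$. Because the multiplication map $S^{-1}\ck{k}\otimes_{\ck{k}}S^{-1}\ck{k}\to S^{-1}\ck{k}$ is bijective and $S^{-1}\ck{k}$ is $\ck{k}$-flat, $\ell$ is a flat epimorphism, and hence so is $\psi$. Finally, if $U$ is dense and $\psi(f)=f\res_U=\bz$, then $f$ vanishes on a dense set and, being continuous, is identically $\bz$ on $\R$; thus $\psi$ is injective, i.e.\ a monomorphism. (When $U$ is dense, every $s\in S$ has dense cozero set and so is a non-zero divisor, matching the description of $\C^{\,k}(U)$ as a localization of $\ck{k}$ at non-zero divisors.)
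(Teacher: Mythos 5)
Your proposal is correct and takes essentially the same route as the paper: both realize $\C^{\,k}(U)$ as the classical localization of $\ck{k}$ at the multiplicatively closed set $S=\{s\in\ck{k}\mid U\sbq\coz s\}$, using Theorems~\ref{fg=h} and \ref{fh-k} to write each $f\in\C^{\,k}(U)$ as a fraction and citing that localization maps are flat epimorphisms. The only difference is that you explicitly verify the kernel condition for the induced map to be an isomorphism (choosing $t$ with $\coz t=\In\z(h)$ via Corollary~\ref{open} so that $th=\bz$), a step the paper's proof leaves implicit.
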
 

\begin{proof} Since classical localization gives a flat epimorphism (e.g., \cite[Proposition~10.8]{S}), it suffices to show that $\C^{\,k}(U)$ is a classical localization of $\ck{k}$. Consider $M=\{a\in \ck{k}\mid U\sbq \coz a\}$.  This is a multiplicatively closed set.  If $a\in M$ then $a\res_U$ is invertible in $\C^{\,k}(U)$.  By the theorems of Section~2, if $f\in \C^{\,k}(U)$ there are $g,h \in \ck{k}$ with $U\sbq \coz g$ (i.e.,, $g\in M$), and $fg\res_U=h\res_U$, or $f= (g\res_U)\inv h\res_U$, giving the result. If $U$ is dense then the elements of $M$ are non-zero divisors, by Corollary~\ref{open}. 
\end{proof}

Notice that, as in the proposition, for a dense open $V$, the corresponding multiplicatively closed set of non-zero divisors is $M=\{g\in \ck{k}\mid V\sbq \coz g\}$.  Every non-zero divisor of $\ck{k}$ lies in one of these sets. 

See \cite{BBR} for more information about epimorphisms and the rings $\cx$: earlier works on epimorphisms in the category of rings are found in the references. 

With this in hand, it is possible to show that for $\ck{k}$, $k\in \N \cup \{\infty\}$, that the classical ring of quotients $\qcl (\ck{k})$ and the complete ring of quotients $\text{Q}(\ck{k})$ coincide and are self-injective (von Neuman) regular rings. (For the maximal flat epimorphic ring of quotients, $\text{Q}_{\text{tot}}(\ck{k})$, see \cite[XI, \S4]{S}.)

\begin{thm} \label{Qcl}  Let $k\in \N\cup \{\infty\}$.  Then, the classical ring of quotients $\qcl(\ck{k})$, the complete ring of quotients $\text{Q}(\ck{k})$, and the maximal flat epimorphism extension $\text{Q}_{\text{tot}}(\ck{k})$ coincide and is a self-injective  (von Neumann) regular ring.  \end{thm}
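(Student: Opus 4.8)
The plan is to establish the chain of extensions
$\ck{k} \sbq \qcl(\ck{k}) \sbq \text{Q}_{\text{tot}}(\ck{k}) \sbq \text{Q}(\ck{k})$
and then collapse it by proving the reverse inclusion $\text{Q}(\ck{k}) \sbq \qcl(\ck{k})$. First I would record that $\ck{k}$ is reduced, being a subring of $\C(\R)$, and that a reduced commutative ring is nonsingular: if $\ann(a)$ were essential for some $a \ne \bz$, it would have to meet the nonzero ideal $(a)$ nontrivially, producing a nonzero $ra \in \ann(a)$ with $(ra)^2 = r a^2 r = \bz$, hence $ra = \bz$, a contradiction. By the Utumi--Lambek theory \cite[Ch.~XII]{S}, nonsingularity gives that $\text{Q}(\ck{k})$ is a von Neumann regular, self-injective ring. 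The left-hand inclusions are standard: classical localization is a flat epimorphism (as in Proposition~\ref{zigzag}, using \cite[Proposition~10.8]{S}), so $\qcl(\ck{k}) \sbq \text{Q}_{\text{tot}}(\ck{k})$, and $\text{Q}_{\text{tot}}(\ck{k}) \sbq \text{Q}(\ck{k})$ for a nonsingular ring \cite[Ch.~XI]{S}. Thus it suffices to prove $\text{Q}(\ck{k}) \sbq \qcl(\ck{k})$, for then all three rings coincide with the von Neumann regular, self-injective ring $\text{Q}(\ck{k})$.

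The crux is to realize an arbitrary $q \in \text{Q}(\ck{k})$ as an honest fraction with non-zero-divisor denominator. Let $I = \{a \in \ck{k} \mid qa \in \ck{k}\}$ be the dense ideal of definition of $q$, and set $W = \bigcup_{a \in I} \coz a$, an open set. I would first check $W$ is dense: otherwise some nonempty open interval is disjoint from every $\coz a$, and Corollary~\ref{open} supplies a nonzero $b \in \ck{k}$ with $\coz b$ equal to that interval, whence $ab = \bz$ for all $a \in I$, contradicting the density (faithfulness) of $I$ in $\text{Q}(\ck{k})$. On $W$ define $\tilde q(x) = (qa)(x)/a(x)$ for any $a \in I$ with $a(x) \ne 0$; this is independent of the choice of $a$ (since $a'(qa) = a(qa')$ in $\ck{k}$) and is locally a quotient of $\C^{\,k}$ functions with nonvanishing denominator, so $\tilde q \in \C^{\,k}(W)$ with $W$ dense open.

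Now I would invoke the main analytic results. Applying Theorem~\ref{fg=h} when $k = \infty$, or Theorem~\ref{fh-k} when $k$ is finite, to $\tilde q \in \C^{\,k}(W)$ yields $g \in \ck{\infty}$ with $W \sbq \coz g$ and $h \in \ck{k}$ such that $\tilde q\, g \res_W = h \res_W$. Since $W$ is dense, $\coz g$ is dense and $g$ is a non-zero-divisor by Corollary~\ref{open}. To see $q = h g\inv$ in $\text{Q}(\ck{k})$ it suffices, by density of $I$, to check $g(qa) = ha$ in $\ck{k}$ for every $a \in I$: both sides are continuous on $\R$ and agree on the dense set $W$ (there $qa = \tilde q\, a$ and $g\tilde q = h$), hence agree everywhere. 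Then $(gq - h)I = \bz$ forces $gq = h$, so $q = h g\inv \in \qcl(\ck{k})$, completing the reverse inclusion.

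The main obstacle is the middle step: producing the denominator $g$. A naive attempt---covering $W$ by countably many $\coz a_n$ with $a_n \in I$ and forming $g = \sum_n c_n a_n^2$ with rapidly decaying $c_n$---founders because such an infinite sum need not lie in the ideal $I$, so one cannot directly exhibit a non-zero-divisor inside $I$. The device that circumvents this is precisely the extension theorem of Section~2: rather than seek $g \in I$, one tames the possibly unbounded $\tilde q$ on $W$ by a single $g \in \ck{\infty}$ so that $\tilde q\, g$ extends globally to $h$, and the identity $q = h g\inv$ is then forced not by membership but by continuity across the dense set $W$. Checking that $\qcl = \text{Q}_{\text{tot}} = \text{Q}$ holds uniformly for finite and infinite $k$ is then routine, since the only difference between the two cases is whether one appeals to Theorem~\ref{fg=h} or to Theorem~\ref{fh-k}.
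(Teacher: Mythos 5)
Your proposal is correct and is essentially the paper's own argument: your function $\tilde q(x)=(qa)(x)/a(x)$ on the dense open set $W=\bigcup_{a\in I}\coz a$ is precisely the paper's embedding $P(\phi)(x)=\phi(d)(x)/d(x)$ of $\Hom(D,R)$ into $\C^{\,k}(V_D)$, and both proofs then apply Theorems~\ref{fg=h} and \ref{fh-k} to this function to realize the given element as $hg\inv$ with $g$ a non-zero divisor (dense cozero set, via Corollary~\ref{open}). The remaining differences are organizational rather than mathematical: the paper routes through the direct limit of the rings $\C^{\,k}(V)$ over dense open $V$ and quotes the criterion that $\qcl(R)$ regular forces $\qcl(R)=\text{Q}_{\text{tot}}(R)$, whereas you argue element-by-element in $\text{Q}(\ck{k})$ and squeeze $\text{Q}_{\text{tot}}$ between $\qcl$ and $\text{Q}$, obtaining regularity and self-injectivity from nonsingularity instead of from semiprimeness---all standard and equivalent reformulations.
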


\begin{proof} To simplify notation, put $R=\ck{k}$ and, for dense open $V$, $R_V=\C^{\,k}(V)$. By restrictions, there is a directed diagram where for $V\sbq V'$, $\psi_{V',V}\colon R_{V'} \to R_V$, and $\psi_V\colon R \to R_V$. Call the direct limit $T$   with maps $\tau_V\colon R_V \to T$. The ring $R$ embeds in $T$.  The diagram can also be thought of as a diagram of $R$-modules, via the embeddings of $R$ in each $R_V$ and, from this, of $R$ in $T$. Hence, $T_R$ is flat because a direct limit of flat modules is flat.   

The next step is to show that $T= \qcl(R)$. An element $g\in R$ is a non-zero divisor if and only if its cozero set is dense (Corollary~\ref{open} again). If $g\in R$ with $\coz g =V$ is dense, then $g\res_V$ is invertible in $R_V$, and so its image is invertible in $L$. Moreover, if $t \in T$ with representative $f\in R_V$, by Theorem~\ref{fg=h},   there exist $g,h\in R$ with $\coz g=V$ and $fg\res_V =  h\res_V$. Then $f= h\res_V(g\res_V)\inv$, showing that $t =hg\inv \in T$. 

The next step is to show that $T= \qcl(R)$ is the complete ring of quotients. Recall that the complete ring of quotients, $\text{Q}(R)$ is the direct limit $\lim_\to \Hom_R(D,R)$, where $D$ ranges over the dense ideals of $R$ (i.e., with $\ann D= \bz$). For $D$ a dense ideal, put $V_D = \bigcup_{a\in D} \coz a$. By Corollary~\ref{open}, this is a dense open set. On the other hand, for $V$ dense open, $D_V=\{a\in R\mid \coz a\sbq V\}$ is a dense ideal because it contains non-zero divisors. In addition, for any commutative ring $S$, $\qcl(S) \sbq \text{Q}(S)$. The reasoning of \cite[page~15]{FGL} for $\C(\R)$ applies here, as well, and will only be sketched.   It will be seen that each $\Hom(D,R)$ can be embedded, as an $R$-module, in $\C^k(V_D)$. 

For $\phi\in \Hom(D,R)$, define, for $x\in V_D$, $P(\phi)(x) =\frac{\phi(d)(x)}{d(x)}$, whenever $x\in \coz d$. If $x\in \coz d\cap \coz d'$, then $\frac{\phi(d)(x)}{d(x)} = \frac{\phi(d)(x) d'(x)}{d(x)d'(x)}= \frac{\phi(dd')(x)}{dd'(x)} =\frac{\phi(d')(x) d(x)}{d(x)d'(x)} =\frac{\phi(d')(x)}{d'(x)}$. Moreover, for $d\in D$,  $P(\phi)$ is $\C^k$ on $\coz d$ and, by the above calculation, $P(\phi)$ is $\C^k$ on all of $V_D$. Then, $P\colon \Hom(D,R) \to \C^k(V_D)$ is an embedding of $R$-modules; indeed, for $r\in R$, $P(\phi r) = P(\phi)r$ because $\phi$ is an $R$-module map.  Using these embeddings, the directed diagram for $\text{Q}(R)$ embeds in that for $\qcl(R)$, giving that $\text{Q}(R) \sbq \qcl (R)$.  This with the opposite inclusion yields $\text{Q}(R) = \qcl(R)$. 

Since the complete ring of quotients of a semiprime commutative ring is always regular (e.g., Stenstr\"{o}m \cite[XII, Proposition~2.2]{S}), it follows that $\qcl(\ck{k})$ is regular. 

As final steps, it is only necessary to quote \cite[page~235, Example~2]{S}, which states that if $\qcl(R)$ is regular then $\qcl(R) = \text{Q}_{\text{tot}}(R)$, and, moreover,  this ring is self-injective by J. Lambek \cite[\S4.5, Corollary to Proposition~2]{L}. \end{proof}

For metric spaces $X$, $\qcl(\C(X)) = \text{Q}(\C(X))$ by \cite[page~20]{FGL}, in particular for $X=\R$. 

Since $\ck{k}$ is complemented (the remark after Corollary~\ref{open}) it follows that $\qcl(\ck{k})$ is regular (in fact the two properties are equivalent: D.F. Anderson and A. Badawi \cite[Theorem~2.3]{AB} and also \cite[Theorem~2.5]{KLMS}), but the theorem supplies more information than the regularity, namely that $\qcl(\ck{k})= \text{Q}(\ck{k})$.

\begin{cor}\label{Qopen}   Let $k\in \N \cup \{\infty\}$ and $\vide \ne U$ an open set in $\R$.  Then $\qcl(\C^{\,k}(U)) = Q(\C^{\,k}(U))= \text{Q}_{\text{tot}}(\C^{\,k}(U))$, is a regular ring.\end{cor}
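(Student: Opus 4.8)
The plan is to treat $S = \C^{\,k}(U)$ itself as the base ring and re-run the argument of Theorem~\ref{Qcl}, with $U$ now in the role previously played by $\R$ and with ``dense open'' meaning dense open \emph{in $U$}. First observe that $S$ is a reduced, hence semiprime, commutative ring, so the general facts invoked at the end of Theorem~\ref{Qcl} (the complete ring of quotients of a semiprime commutative ring is regular, a regular $\qcl$ coincides with $\text{Q}_{\text{tot}}$, and such a ring is self-injective) apply to $S$ verbatim. What must be supplied are the \emph{relative} versions of the Section~2 results: an analogue of Corollary~\ref{open} and of the localization statement in Proposition~\ref{zigzag}, with $S$ as base ring.

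These relative forms follow by restriction to $U$. If $W$ is open in $U$ then $W$ is open in $\R$, so by Corollary~\ref{open} there is $a\in\ck{k}$ with $\coz a = W$; then $a\res_U\in S$ and $\coz(a\res_U)=\coz a\cap U = W$, giving the relative Corollary~\ref{open}. Consequently the non-zero divisors of $S$ are exactly the $b\in S$ with $\coz b$ dense in $U$, and $S$ is complemented, by the reasoning following Corollary~\ref{open}. Next let $V$ be dense open in $U$ (hence open in $\R$) and $f\in\C^{\,k}(V)$. Applying Theorem~\ref{fh-k} (or Theorem~\ref{fg=h} when $k=\infty$) with $V$ in place of $U$ produces $g\in\ck{\infty}$ and $h\in\ck{k}$ with $V\sbq\coz g$ and $fg\res_V=h\res_V$. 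Restricting to $U$ yields $g\res_U,h\res_U\in S$ with $V\sbq\coz(g\res_U)$ and $f\,(g\res_U)\res_V=(h\res_U)\res_V$; since $V$ is dense in $U$, $g\res_U$ is a non-zero divisor of $S$. Thus $f=(h\res_U)\res_V\,((g\res_U)\res_V)\inv$ in $\C^{\,k}(V)$, exhibiting $\C^{\,k}(V)$ as the localization of $S$ at the multiplicatively closed set $\{b\in S\mid V\sbq\coz b\}$ of non-zero divisors. This is the relative analogue of Proposition~\ref{zigzag}.

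With these in hand the proof of Theorem~\ref{Qcl} transfers with no change. Form the directed system $\{\C^{\,k}(V)\mid V\text{ dense open in }U\}$ of restriction maps, let $T$ be its direct limit, and note that $T_S$ is flat, being a direct limit of the flat (localization) modules $\C^{\,k}(V)$. Every non-zero divisor of $S$ has dense cozero set $V$ and so becomes invertible in $\C^{\,k}(V)$ and hence in $T$, while the localization statement above shows every element of $T$ has the form $h\,g\inv$ with $g$ a non-zero divisor of $S$; hence $T=\qcl(S)$. For the complete ring of quotients, associate to each dense ideal $D\sbq S$ the set $V_D=\bigcup_{a\in D}\coz a$, which is dense open in $U$ (if some nonempty open $W\sbq U$ missed $V_D$, then a $0\ne c\in S$ with $\coz c=W$ would annihilate $D$), and embed $\Hom_S(D,S)$ into $\C^{\,k}(V_D)$ by $P(\phi)(x)=\phi(d)(x)/d(x)$ for $x\in\coz d$, exactly as in Theorem~\ref{Qcl}. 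This gives $\text{Q}(S)\sbq\qcl(S)$, and with the general inclusion $\qcl(S)\sbq\text{Q}(S)$ one obtains $\text{Q}(S)=\qcl(S)$. Since $S$ is semiprime, $\text{Q}(S)$ is regular, whence $\qcl(S)=\text{Q}(S)=\text{Q}_{\text{tot}}(S)$ is a self-injective regular ring.

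I expect the only point needing genuine care to be the relative forms in the second paragraph, namely verifying that restriction to $U$ preserves the cozero and extension data produced in Section~2 (so that the ambient topology is that of $U$ throughout, with density taken relative to $U$). Once this is confirmed the remainder is a verbatim repetition of the proof of Theorem~\ref{Qcl}, so no new obstacle arises.
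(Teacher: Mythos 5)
Your proof is correct, but it takes a genuinely different route from the paper's. The paper's proof of Corollary~\ref{Qopen} is a short reduction: write $U=\bigcup_{n} I_n$ as a disjoint union of open intervals, so that $\C^{\,k}(U)=\prod_n \C^{\,k}(I_n)$; invoke the fact that both $\qcl$ and $\text{Q}$ commute with direct products; and observe that each open interval $I_n$ admits a $\C^{\,\infty}$ bijection with $\R$, so $\C^{\,k}(I_n)\cong \ck{k}$ and Theorem~\ref{Qcl} applies componentwise. You instead relativize the whole machinery to the base ring $S=\C^{\,k}(U)$: you rederive Corollary~\ref{open} and the localization statement of Proposition~\ref{zigzag} over $U$ by restricting the globally defined $g,h$ of Theorems~\ref{fg=h} and \ref{fh-k} to $U$ (legitimate, since $V$ open in $U$ is open in $\R$, and $\coz(a\res_U)=\coz a$ when $\coz a\sbq U$), identify the non-zero divisors of $S$ as the elements with cozero set dense in $U$, and then rerun the direct-limit argument and the embedding $\Hom_S(D,S)\to \C^{\,k}(V_D)$ verbatim; all of these steps check out, including the density of $V_D$ in $U$ and the injectivity of the restriction maps for $V$ dense in $U$. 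What the paper's route buys is brevity, at the cost of two extra ingredients peculiar to the situation: the interval decomposition available only in dimension one, and the commutation of $\qcl$ and $\text{Q}$ with infinite products. What your route buys is uniformity and independence from those ingredients: it shows the proof of Theorem~\ref{Qcl} is genuinely local and would survive in settings where $U$ does not decompose into pieces diffeomorphic to the whole space; as a bonus it also delivers self-injectivity, which the corollary does not even assert. The one point you flag as delicate --- that density must be taken relative to $U$ throughout --- is indeed the only subtlety, and your handling of it is sound.
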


\begin{proof} Since $U=\bigcup_{n\in \N}I_n$, a disjoint union of intervals it follows that $\C^{\,k}(U) = \prod_{n\in N}\C^{\,k}(I_n)$.  Both types of rings of quotients commute with products and, hence, it suffices to consider each $\C^{\,k}(I_n)$.  However, for each open interval $I_n$ there is a $\C^{\,\infty}$ bijection between $I_n$ and $\R$.  Theorem~\ref{Qcl} then gives the result.  
\end{proof}

Another word about the rings of quotients of $\ck{k}$: they are all distinct.

\begin{prop} \label{kvsl}  For any $k,l\in \N\cup \{\infty\}$, $k< l$, $\qcl(\ck{k}) \ne \qcl(\ck{l})$.  Moreover, for $k< l\le \infty$, $\qcl(\ck{k}) \supset \qcl(\ck{l})$. \end{prop}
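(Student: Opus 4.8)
The plan is to realise all the rings $\qcl(\ck{j})$, $j\in\N\cup\{\infty\}$, simultaneously as subrings of the single ring $\qcl(\C(\R))$ of ``germs of continuous functions on dense open sets'', and then to exhibit an explicit element of $\qcl(\ck{k})$ that is $\C^{\,l}$ on no dense open set. Throughout, note that since $k<l$ one necessarily has $k\in\N$.

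First I would record the description of $\qcl(\ck{j})$ implicit in the proof of Theorem~\ref{Qcl}: it is the direct limit $\lim_\to\C^{\,j}(V)$ over the dense open $V$, directed by reverse inclusion, with restriction maps. The crucial observation is an \emph{agreement principle}: if $f\in\C^{\,j}(V)$ and $g\in\C^{\,j}(V')$ coincide on some dense open $W\sbq V\cap V'$, then $W$, being dense in the open set $V\cap V'$, forces $f=g$ on all of $V\cap V'$ by continuity. Two consequences follow. On the one hand, for each $j$ the map $\lim_\to\C^{\,j}(V)\to\lim_\to\C^{\,0}(V)=\qcl(\C(\R))$ induced by the inclusions $\C^{\,j}(V)\sbq\C^{\,0}(V)$ is injective, so $\qcl(\ck{j})$ may be identified, as a ring (the operations being pointwise on overlaps), with the set of germs representable by a $\C^{\,j}$ function on some dense open set. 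On the other hand, since $\C^{\,l}(V)\sbq\C^{\,k}(V)$ whenever $k\le l$, these identifications give at once $\qcl(\ck{l})\sbq\qcl(\ck{k})$, which is the containment asserted in the second statement.

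Next I would isolate the membership criterion: a germ represented by $f\in\C^{\,k}(V)$ lies in $\qcl(\ck{l})$ if and only if $f$ is of class $\C^{\,l}$ on some dense open subset of $V$. If such a subset $W'\sbq V$ exists, then $f\res_{W'}\in\C^{\,l}(W')$ represents an element of $\qcl(\ck{l})$ mapping to the given germ; conversely, if that germ equals one represented by $g\in\C^{\,l}(W)$, the agreement principle gives $f=g$ on the dense open set $V\cap W$, so $f$ is $\C^{\,l}$ there. Hence, to separate $\qcl(\ck{k})$ from $\qcl(\ck{l})$ it suffices to produce $f\in\ck{k}=\C^{\,k}(\R)$ whose restriction to no nonempty open set is $\C^{\,l}$; it is enough that $\derk{k+1}f$ exist at no point.

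Finally I would construct such an $f$. Let $w\in\C(\R)$ be continuous and nowhere differentiable (a Weierstrass function, say), set $f_0=w$ and $f_j(x)=\int_0^x f_{j-1}(t)\,dt$ for $1\le j\le k$, and put $f=f_k$. By the fundamental theorem of calculus $\derk{k}f=w$ is continuous, so $f\in\ck{k}$, whereas $\derk{k+1}f$ would have to be $w'$ and so exists nowhere. Thus $f$ fails to be $\C^{\,k+1}$ — a fortiori $\C^{\,l}$ for every $l$ with $k<l\le\infty$ — on every nonempty open set, hence on no dense open set. By the criterion, $f\in\qcl(\ck{k})\setminus\qcl(\ck{l})$, so the containment $\qcl(\ck{k})\spq\qcl(\ck{l})$ is proper and, in particular, $\qcl(\ck{k})\ne\qcl(\ck{l})$. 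The main obstacle is the bookkeeping of the first two paragraphs — checking that the comparison maps of direct limits are well defined and injective — all of which reduces to the density-plus-continuity agreement principle; the analytic input (a continuous nowhere-differentiable $w$ and the fact that $k$-fold integration raises smoothness by exactly $k$) is classical.
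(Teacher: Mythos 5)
Your proposal is correct and is essentially the paper's own argument: both use the same witness (a $k$-fold integral of a continuous nowhere differentiable Weierstrass function, so that $\derk{k+1}f$ exists at no point) and the same key fact that any element of $\qcl(\ck{l})$, written as $h(g\res_V)\inv$ with $\coz g=V$ dense, is of class $\C^{\,l}$ on a dense open set, while the containment $\qcl(\ck{l})\sbq\qcl(\ck{k})$ comes in both cases from the observation that ($\C^{\,l}$-)non-zero divisors remain non-zero divisors at the lower smoothness level. Your germ realization inside $\lim_\to \C^{\,0}(V)$, the agreement principle, and the membership criterion are just a careful spelling-out of the direct-limit identification $\qcl(\ck{j})=\lim_\to \C^{\,j}(V)$ already established in the proof of Theorem~\ref{Qcl}, which the paper's terser proof uses implicitly.
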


\begin{proof} Consider $k\in \N$.  Let $f\in \ck{k}$ be such that, in some open set $U\ne \vide$, it does not have a continuous $k+1$\,st derivative anywhere in $U$.  (Integrating the Weierstrass continuous nowhere differentiable function sufficiently many times would give an example with $U=\R$.) If $f\in \qcl(\ck{l})$, for some $k<l\le \infty$, then there would be $g, h\in \ck{l}$ with $V=\coz g$ dense in $\R$ and $f\res_Vg\res_V= h\res_V$, or $f\res_V= (g\res_V)\inv h\res_V$.  However, this would show that $f$ has a $k+1$\,st derivative in $U\cap V$, a contradiction. 

The second statement follows because a non-zero divisor in $\ck{l}$ is also one in $\ck{k}$. \end{proof}

In fact, the situation of the proof of Proposition~\ref{kvsl} is the only case where $\qcl(\ck{k})$ can differ from $\qcl(\ck{k+1})$, i.e., where there is an element of $\ck{k}$ not having a continuous $k+1$\,st derivative anywhere on some non-empty open set. 

\begin{prop}\label{almost}
  (1)~For each $k\in \N\cup \{\infty\}$, let $L_k=\{f\in \C(\R)\mid \text{there is $V\sbq \R$, dense open, such that $f\res_V\in \C^{\,k}(V)$}\}$, then the ring $L_k\sbq \qcl(\ck{k})$.  Moreover, $\qcl(L_k) = \qcl(\ck{k})$. 

(2)~Suppose $f\in \ck{k} \setminus \ck{k+1}$ but there is a dense open set $V$ such that $f\res_V\in \C^{\,k+1}(V)$.  Then, $f\in \qcl(\ck{k+1})$.   \end{prop}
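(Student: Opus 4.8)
The plan is to establish Part~(1) and then read off Part~(2). First I would check that $L_k$ is a subring of $\C(\R)$: since the intersection of two dense open sets is again dense open, if $f\res_V$ and $f'\res_{V'}$ are $\C^{\,k}$ on dense open $V,V'$, then $f\pm f'$ and $ff'$ are $\C^{\,k}$ on the dense open set $V\cap V'$. For the inclusion $L_k\sbq\qcl(\ck{k})$, take $f\in L_k$ with $f\res_V\in\C^{\,k}(V)$ for some dense open $V$, and apply Theorem~\ref{fg=h} (if $k=\infty$) or Theorem~\ref{fh-k} (if $k$ is finite) to $f\res_V$, obtaining $g\in\ck{\infty}$ with $\coz g=V$ and $h\in\ck{k}$ with $fg\res_V=h\res_V$. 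The clean point is that $fg$ and $h$ are continuous on $\R$ and agree on the dense set $V$, hence $fg=h$ on all of $\R$; since $\coz g=V$ is dense, $g$ is a non-zero divisor of $\ck{k}$ (Corollary~\ref{open}), and identifying $f$ with the class obtained by restricting to $V$ shows $f=hg\inv\in\qcl(\ck{k})$.

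For the assertion $\qcl(L_k)=\qcl(\ck{k})$, write $Q=\qcl(\ck{k})$, which is von Neumann regular by Theorem~\ref{Qcl}, so in $Q$ the non-zero divisors are exactly the units. Because a function in $\C(\R)$ is a non-zero divisor precisely when its cozero set is dense, every non-zero divisor of $\ck{k}$ stays a non-zero divisor in the larger ring $L_k$; together with $\ck{k}\sbq L_k$ this gives $\qcl(\ck{k})\sbq\qcl(L_k)$. The reverse inclusion is the main obstacle, and it reduces to showing that every non-zero divisor $d$ of $L_k$ is already a unit in $Q$. I would argue first that $\coz d$ is dense: if not, Corollary~\ref{open} supplies a nonzero $a\in\ck{\infty}\sbq L_k$ whose cozero set is a nonempty open set disjoint from $\coz d$, forcing $ad=\bz$ and contradicting that $d$ is a non-zero divisor. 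Now, with $\coz d$ dense and $d\res_V\in\C^{\,k}(V)$ for some dense open $V$, applying the theorems of Section~2 to $d$ produces $g_d\in\ck{\infty}$ with $\coz g_d$ dense and $h_d\in\ck{k}$ with $dg_d=h_d$ on $\R$. The decisive observation is that $\coz h_d=\coz d\cap\coz g_d$ is dense, so $h_d$ is itself a non-zero divisor of $\ck{k}$; hence both $g_d$ and $h_d$ are units of $Q$, and $d=h_dg_d\inv$ is a unit of $Q$. Consequently any $cd\inv\in\qcl(L_k)$ with $c\in L_k\sbq Q$ lies in $Q$, giving $\qcl(L_k)\sbq Q$ and therefore equality.

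Finally, Part~(2) is immediate from Part~(1): an $f\in\ck{k}\setminus\ck{k+1}$ that is $\C^{\,k+1}$ on some dense open $V$ is a continuous function on $\R$ satisfying the defining property of $L_{k+1}$, so $f\in L_{k+1}\sbq\qcl(\ck{k+1})$ by Part~(1) applied with $k+1$ in place of $k$. The only delicate step in the whole argument is the one flagged above, namely promoting a non-zero divisor of $L_k$ to a unit of $Q$, and it is exactly there that the explicit factorization furnished by Theorems~\ref{fg=h} and \ref{fh-k}, rather than mere membership in $Q$, is required.
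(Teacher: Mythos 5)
Your proof is correct. Its first half --- embedding $L_k$ into $\qcl(\ck{k})$ by applying Theorems~\ref{fg=h} and \ref{fh-k} to $f\res_V$ and writing $f=hg\inv$ --- and the deduction of (2) from (1) match the paper's argument exactly; your observation that $fg$ and $h$, being continuous on $\R$ and equal on the dense set $V$, agree on all of $\R$ is a helpful touch that makes well-definedness and injectivity of the embedding $L_k\to \qcl(\ck{k})$ transparent, where the paper merely asserts that the process is independent of the choice of $V$. Where you genuinely diverge is the ``moreover'' clause: the paper dispatches $\qcl(L_k)=\qcl(\ck{k})$ in one line from $\ck{k}\sbq L_k\sbq \qcl(\ck{k})$, implicitly using the general algebraic fact that any ring sandwiched between $R$ and $\qcl(R)$ has the same classical ring of quotients. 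That fact needs none of the analytic machinery: if $t\in S$ is a non-zero divisor of $S$ and $tq=0$ for $q=ab\inv\in\qcl(R)$, then clearing the denominator gives $ta=0$ with $a\in R\sbq S$, so $a=0$ and $q=0$; thus $t$ is a non-zero divisor of $\qcl(R)$, hence a unit there (here one may quote regularity from Theorem~\ref{Qcl}, or note directly that $\qcl(\qcl(R))=\qcl(R)$). So the step you flag as the ``main obstacle'' --- promoting a non-zero divisor $d$ of $L_k$ to a unit of $Q$ --- does not actually require your density argument via Corollary~\ref{open} or a second application of the Section~2 theorems to $d$; your concrete route (show $\coz d$ is dense, factor $d=h_dg_d\inv$ with $\coz h_d=\coz d\cap \coz g_d$ dense) is valid and self-contained, but it spends the explicit factorization where a two-line denominator-clearing lemma suffices, while in exchange it exhibits the fraction representing $d$ explicitly. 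Everything else, including your check that $L_k$ is a ring and that non-zero divisors of $\ck{k}$ remain non-zero divisors in $L_k$, is sound.
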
 
\begin{proof} (1)~Notice that $L_k$ is a ring, $\ck{k}\subset L_k\subset \C(\R)$. Suppose $f\in L_k$, i.e., $f$ is $\C^k$ on the dense open set $V$.  There exist $g,h\in \ck{k}$ with $\coz g=V$ and $f\res_V g\res_V = h\res_V$. Hence, $f\res_V = (g\res_V)\inv h\res_V \in \C^k(V)$ and $\C^k(V)$ embeds in $\qcl(\ck{k})$. This process is independent of the choice of $V$ and clearly yields a ring homomorphism $\zeta\colon L_k\to \qcl(\ck{k})$. A function in $\C(\R)$ that is zero on a dense open set, is zero. Hence, $\zeta$ is an embedding.  However, $\ck{k}\sbq L_k$, showing that  $\qcl(L_k) = \qcl(\ck{k})$. 

(2)~The element $f$ in the statement is in $L_{k+1}$ and so part (1) gives the result. 
\end{proof}

\end{document}